\newtheorem{Th}{Theorem}[section]
\newtheorem{Prop}[Th]{Proposition}
\theoremstyle{definition}
\newtheorem{Lem}[Th]{Lemma}
\newtheorem{Rem}[Th]{Remark}
\theoremstyle{definition}
\newtheorem{Def}[Th]{Definition}
\numberwithin{equation}{section} 
\renewcommand{\leq}{\leqslant}
\renewcommand{\geq}{\geqslant}
\begin{document}
\title[Higher order parabolic equation]
   {A note on lifespan estimates for higher-order parabolic equations}

\author[N.N. Tobakhanov]{Nurdaulet N. Tobakhanov}
\address{
  Nurdaulet N. Tobakhanov:
  \endgraf
  Department of Mathematics
  \endgraf
Nazarbayev University, Astana, Kazakhstan
  \endgraf
  {\it } {\rm nurdaulet.tobakhanov@nu.edu.kz}
  }

 \author[B.T. Torebek]{Berikbol T. Torebek}
\address{
	Berikbol T. Torebek:
	\endgraf
Institute of
Mathematics and Mathematical Modeling
\endgraf
Shevchenko street 28, 050010 Almaty, Kazakhstan
	\endgraf {\it} {\rm torebek@math.kz}
} 


\thanks{Corresponding author: Berikbol T. Torebek, email: torebek@math.kz}

     \keywords{Parabolic equations; Lifespan estimates; Critical exponent; Blow-up}

     \subjclass{35K25, 35K58, 35B44, 35B33}

     \begin{abstract} 
     We investigate the lifespan of solutions to the higher-order semilinear parabolic equation 
     $$ u_t+(-\Delta)^m u=|u|^p, \quad x \in \mathbb{R}^n, \,t>0$$
     with initial data $\varepsilon u_0$. We focus on the precise asymptotic behavior of the lifespan of nontrivial solutions. By combining the test function method and semigroup estimates, we derive both upper and lower bounds for the lifespan of solutions\begin{equation*}T_{\varepsilon} \simeq \begin{cases}\varepsilon^{-\left(\frac{1}{p-1}-\frac{n}{2m}\right)^{-1}}, & 1<p<p_{\text {Fuj }}, \\ \exp \left( \varepsilon^{-(p-1)}\right), & p=p_{\text {Fuj }},\\ \infty, &p>p_{\text {Fuj }},\end{cases}\end{equation*} where $p_{Fuj}=1+\frac{2m}{n}$ is the critical exponent of Fujita. These estimates refine and extend the earlier results of Caristi–Mitidieri [J. Math. Anal. Appl., 279:2 (2003), 710--722] and Sun [Electron. J. Differential Equations, 17 (2010)], who obtained only upper bounds for slowly decaying initial data. In our setting, the above condition on the initial data is replaced by the assumption $u_0\in L^1\cap L^\infty$, which sharpens the results of the aforementioned works.
      \end{abstract}

     \maketitle
\tableofcontents

\section{Introduction and main result}
In this paper, we study
the higher-order semilinear parabolic equation
\begin{equation}\label{main}
  \begin{cases}u_t+(-\Delta)^{m} u=|u|^p, & x \in \mathbb{R}^n, t>0, \\{}\\ u(x, 0)=\varepsilon u_0(x), & x \in \mathbb{R}^n,\end{cases}  
\end{equation}
where $m\in \mathbb{N}$ , $\varepsilon>0$ is a small parameter, and $p>1$. 

This model naturally arises in various physical and engineering contexts as a higher-order diffusion equation. For instance, it describes thin film dynamics under surface tension, see Bernis and Friedman \cite{apl} and the monograph \cite{mon}.

The study of semilinear parabolic equations of the form
\begin{equation}\label{poly}
u_t-\Delta u=|u|^p
\end{equation}
has attracted much attention in recent decades. For the classical semilinear heat equation, Fujita \cite{Fujita} identified the critical exponent  $p_F=1+\frac{2}{n}$ which separates the global existence and blow-up regimes for nontrivial, nonnegative solutions (see also \cite{Hayakawa, SG, koba}). Subsequently, Lee and Ni \cite{t2} obtained precise estimates for the lifespan of solutions, where the lifespan denotes the maximal time interval of the solution. In particular, they proved that if the initial data decay sufficiently fast, then the lifespan of the local solution satisfies the sharp asymptotic estimate
$$
T_{\varepsilon} \simeq \begin{cases} \varepsilon^{-\left(\frac{1}{p-1}-\frac{n}{2}\right)^{-1}}, & 1<p<p_F, \\{}\\ \exp \left( \varepsilon^{-(p-1)}\right), &  p=p_F .\end{cases}
$$
A detailed analysis of lifespan estimates for problem \eqref{poly}, under various classes of initial data, is presented in \cite{Tayachi} and the references therein.

Compared with the classical heat equation, the higher-order case presents analytical difficulties due to the lack of the maximum principle and the sign-changing nature of its fundamental solution. However,  Egorov et al. \cite{Egor} showed that if $$1< p \leq p_{\text{Fuj}}:=1+\frac{2m}{n}$$ and the initial data satisfy $\int_{\mathbb{R}^n} u_0(x) d x \geq 0$, then any nontrivial solution to problem \eqref{main} blows up in finite time. Moreover, it was proved by Galaktionov and Pohozaev \cite{Gala} that for $p>p_{\text {Fuj }}$, the corresponding problem admits global solutions provided the initial data are sufficiently small. In \cite{Zaag}, the authors showed that the equation admits type-I blow-up solutions and a characterization of the corresponding asymptotic behavior. Several studies have addressed blow-up solutions in the setting of polyharmonic operators (see, for example \cite{Alsaedi, Fino2, SunShi}). For a detailed analysis of the Cauchy problems for polyharmonic heat equations with singular initial data, the reader is referred to \cite{Ishige,Miyake}.

It is worth mentioning that Caristi and Mitidieri~\cite{Caristi}, and later Sun~\cite{SunFuqin}, investigated the higher-order semilinear parabolic equation~\eqref{main} under slowly decaying initial data.
They proved that any nontrivial solution blows up in finite time and established an upper bound for the lifespan 
$$
T_{\varepsilon} \lesssim  \varepsilon^{-\left(\frac{1}{p-1}-\frac{\kappa}{2 m}\right)^{-1}}.
$$
The analysis was carried out for initial data satisfying
\begin{equation}\label{U}
u_0(x) \geq\left\{\begin{array}{ll}
\delta, & |x| \leq \varepsilon_0, \\{}\\
C_0|x|^{-\kappa}, & |x|>\varepsilon_0,
\end{array} \right.\end{equation}
where $\delta>0,$ $\varepsilon_0>0,$ and the decay rate parameter $\kappa$ lies in the range
$$
\kappa= \begin{cases}n<\kappa<2 m /(p-1), & 1<p<p_{\text {Fuj }}, \\{}\\ 0<\kappa<n, & p=p_{\text {Fuj }}.\end{cases}
$$
These results reveal that the spatial decay of the initial data affects the blow-up time, indicating that slower decay leads to faster blow-up.

In this work, we refine and extend the above results by considering initial data $u_0 \in L^1\left(\mathbb{R}^n\right) \cap L^{\infty}\left(\mathbb{R}^n\right)$ without any decay assumption.
We establish both upper and lower estimates for the lifespan of solutions to problem \eqref{main}, including the critical Fujita case $p=p_{\text {Fuj }}$.
Our analysis is based on the test function method and the semigroup theory.
These findings offer a more detailed understanding of the blow-up mechanism and extend several known results for the higher-order semilinear parabolic equations.

To proceed, we first recall the definition of a solution to problem \eqref{main}.
\begin{Def} Given $u_0 \in L^1\left(\mathbb{R}^n\right) \cap L^{\infty}\left(\mathbb{R}^n\right)$, a function $$u \in C\left(\left[0, T_{\varepsilon}\right) ; L^1\left(\mathbb{R}^n\right) \cap L^{\infty}\left(\mathbb{R}^n\right)\right)$$ is said to be a mild solution of \eqref{main} if it satisfies the integral equation
\begin{equation}\label{mild}
   u(x, t)= \varepsilon e^{-t(-\Delta)^m}u_0+\int_0^t  e^{-(t-s)(-\Delta)^m}|u(s)|^p ds,
\end{equation}  where $e^{-t(-\Delta)^m}$ denotes the semigroup associated with the polyharmonic heat equation; see Section \ref{section2} for further details.

Moreover, the lifespan of a mild solution is given by 
    $$
\begin{aligned}
 T_{\varepsilon}:=\sup \{T>0\mid \exists \quad &\text {a unique solution }\\
 &\left.\quad u \in C\left([0, T) ; L^1\left(\mathbb{R}^n\right) \cap L^{\infty}\left(\mathbb{R}^n\right)\right) \text { to }\eqref{main}\right\} .
\end{aligned}
$$
\end{Def}
\begin{Th}\label{lf}
     If $u_0 \in L^1\left(\mathbb{R}^n\right) \cap L^{\infty}\left(\mathbb{R}^n\right)$ and $\int_{\mathbb{R}^n} u_0(x) d x > 0$, then for sufficiently small
\(\varepsilon>0\), problem \eqref{main} admits a unique mild solution
$$
u \in C\left(\left[0, T_{\varepsilon}\right) ; L^1\left(\mathbb{R}^n\right)\right) \cap L^{\infty}\left(\left(0, T_{\varepsilon}\right) ; L^{\infty}\left(\mathbb{R}^n\right)\right)
$$
for some $T_{\varepsilon}>0$. Moreover,  the lifespan satisfies
\begin{equation}\label{LS}
T_{\varepsilon} \simeq \begin{cases}\varepsilon^{-\left(\frac{1}{p-1}-\frac{n}{2m}\right)^{-1}}, & 1<p<p_{\text {Fuj }}, \\{}\\ \exp \left( \varepsilon^{-(p-1)}\right), & p=p_{\text {Fuj }},\,\\{}\\\infty, &p>p_{\text{Fuj}}.\end{cases}\end{equation}
\end{Th}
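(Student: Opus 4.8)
The plan is to prove Theorem~\ref{lf} in two independent halves. The first half establishes local well-posedness together with the \emph{lower} bounds $T_\varepsilon\gtrsim\cdots$, via a contraction-mapping argument built on the $L^r$--$L^q$ smoothing estimates for the semigroup $S(t):=e^{-t(-\Delta)^m}$. The second half establishes the matching \emph{upper} bounds $T_\varepsilon\lesssim\cdots$ by the rescaled test-function (nonlinear capacity) method, which needs neither a maximum principle nor positivity of the kernel. Since the two halves produce the same powers of $\varepsilon$, combining them yields the equivalence \eqref{LS}.

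\textbf{Well-posedness and lower bounds.} The kernel of $S(t)$ has the self-similar form $G_m(x,t)=t^{-n/2m}\Psi(x\,t^{-1/2m})$ with $\Psi\in\mathcal S(\R^n)$, so Young's convolution inequality gives $\|S(t)f\|_{L^q}\le C\,t^{-\frac{n}{2m}\left(\frac1r-\frac1q\right)}\|f\|_{L^r}$ for $1\le r\le q\le\infty$. I would look for a fixed point of the Duhamel map $\Phi[u](t)=\varepsilon S(t)u_0+\int_0^t S(t-s)|u(s)|^p\,ds$ in the closed ball of radius $\sim\varepsilon\|u_0\|_{L^1\cap L^\infty}$ in the space with norm $\|u\|_T:=\sup_{0<t<T}(1+t)^{n/2m}\|u(t)\|_{L^\infty}+\sup_{0<t<T}\|u(t)\|_{L^1}$. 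Using $\bigl\||u(s)|^p\bigr\|_{L^1}\le\|u(s)\|_{L^\infty}^{p-1}\|u(s)\|_{L^1}$ together with the smoothing estimates (splitting the Duhamel integral at $s=t/2$: the $L^\infty\to L^\infty$ bound on $[t/2,t]$, the $L^1\to L^\infty$ bound on $[0,t/2]$), the nonlinear term is dominated by $\|u\|_T^{\,p}\int_0^T(1+s)^{-\frac{n(p-1)}{2m}}ds$, and $\bigl||a|^p-|b|^p\bigr|\le p\bigl(|a|^{p-1}+|b|^{p-1}\bigr)|a-b|$ yields the contraction estimate with the same factor. Since $\tfrac{n(p-1)}{2m}<1$ for $p<p_{\rm Fuj}$ and $=1$ for $p=p_{\rm Fuj}$, the controlling integral is $\asymp T^{\,1-\frac{n(p-1)}{2m}}$ (subcritical) resp.\ $\asymp\log(2+T)$ (critical). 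The ball is invariant and the map contracts as soon as $\varepsilon^{p-1}T^{\,1-\frac{n(p-1)}{2m}}$ resp.\ $\varepsilon^{p-1}\log(2+T)$ is small, i.e.\ for $T$ up to a constant times $\varepsilon^{-\left(\frac1{p-1}-\frac n{2m}\right)^{-1}}$ resp.\ $\exp(\varepsilon^{-(p-1)})$; this produces a unique mild solution on that interval with the stated regularity, hence the lower bounds on $T_\varepsilon$.

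\textbf{Upper bounds, subcritical case.} First I would check that a mild solution satisfies the weak formulation, i.e.\ for every nonnegative $\varphi\in C_c^\infty(\R^n\times[0,\infty))$,
\[
\varepsilon\!\int_{\R^n}\! u_0\,\varphi(\cdot,0)\,dx+\int_0^\infty\!\!\int_{\R^n}\!|u|^p\varphi\,dx\,dt=\int_0^\infty\!\!\int_{\R^n}\! u\bigl(-\partial_t\varphi+(-\Delta)^m\varphi\bigr)\,dx\,dt .
\]
Taking $\varphi=\varphi_R(x,t)=\bigl(\Phi(x/R)\,\Theta(t/R^{2m})\bigr)^{q}$ with $\Phi,\Theta$ standard cutoffs, $\Theta(0)=1$, $q$ large, and applying Young's inequality $ab\le\frac1p a^p\varphi_R+\frac1{p'}\varphi_R^{-1/(p-1)}b^{p'}$ to absorb $\tfrac12\iint|u|^p\varphi_R$, the bound $\varphi_R^{-1/(p-1)}\bigl(|\partial_t\varphi_R|^{p'}+|(-\Delta)^m\varphi_R|^{p'}\bigr)\lesssim R^{-2mp'}\mathbf 1_{\operatorname{supp}\varphi_R}$ together with $|\operatorname{supp}\varphi_R|\lesssim R^{n+2m}$ gives, for every $R$ with $R^{2m}<T_\varepsilon$,
\[
\varepsilon\!\int_{\R^n}\! u_0\,\Phi(x/R)^q\,dx\;\le\;C\,R^{\,n-\frac{2m}{p-1}} .
\]
For $1<p<p_{\rm Fuj}$, $n-\frac{2m}{p-1}<0$, and since $\int u_0>0$ the left side is $\ge\tfrac\varepsilon2\int u_0>0$ for all $R\ge R_0$ (with $R_0$ fixed and $\varepsilon$-independent). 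Hence $R\le C'\varepsilon^{-\frac{p-1}{2m-n(p-1)}}$ for every admissible $R$, which forces $T_\varepsilon^{1/2m}\le C'\varepsilon^{-\frac{p-1}{2m-n(p-1)}}$, i.e.\ $T_\varepsilon\le C''\varepsilon^{-\left(\frac1{p-1}-\frac n{2m}\right)^{-1}}$, matching the lower bound.

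\textbf{The main obstacle: the critical case.} When $p=p_{\rm Fuj}$ one has $n-\frac{2m}{p-1}=0$, so the estimate above degenerates to $\varepsilon\int u_0\le C$ and is useless, and because $G_m$ changes sign for $m\ge2$ the Kaplan/Jensen-type ODE argument that disposes of the critical heat equation is unavailable. The plan is to refine the test-function scheme: estimate the right-hand side of the weak identity by Hölder's inequality \emph{restricted to the supports of $\partial_t\varphi_R$ and of $(-\Delta)^m\varphi_R$} (a time-annulus $\{t\sim R^{2m}\}$ and a space-annulus $\{|x|\sim R\}$), chosen so that these sets are essentially pairwise disjoint along the dyadic sequence $R=2^jR_0$, $j=0,\dots,N$, with $2^{N}R_0\sim T_\varepsilon^{1/2m}$ — equivalently, work with a single logarithmically graded time cutoff. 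Summing the localized inequalities over $j$, the almost-disjointness collapses the sum of the localized $|u|^p$-integrals into one spacetime integral of $|u|^p$, which the uniform ($\varepsilon$-independent) test-function bound controls, so that the number $N\sim\log T_\varepsilon$ of available scales is bounded by a power of $\varepsilon^{-1}$; pinning this down to $N\lesssim\varepsilon^{-(p-1)}$ yields $T_\varepsilon\lesssim\exp(C\varepsilon^{-(p-1)})$. I expect the genuinely delicate point to be organizing the Hölder exponents and the scale summation so as to extract \emph{exactly} the power $(p-1)$ (and not a larger power of $\varepsilon^{-1}$), together with keeping the test-function bound uniform in $\varepsilon$ all the way up to the lifespan; a secondary technical point is the justification of the weak formulation for mild solutions near $t=0$.
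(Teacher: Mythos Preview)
Your lower-bound argument is essentially the paper's: the same weighted norm $\sup_t\bigl((1+t)^{n/2m}\|u(t)\|_{L^\infty}+\|u(t)\|_{L^1}\bigr)$, the same $L^r$--$L^q$ semigroup bounds, the same split of the Duhamel integral at $t/2$, and the same controlling factor $\int_0^T(1+s)^{-n(p-1)/2m}\,ds$. For the subcritical upper bound you take a somewhat different route: a \emph{product} cutoff $\Phi(x/R)\Theta(t/R^{2m})$ with Young absorption, whereas the paper uses a single space--time cutoff $\psi_R=\phi\bigl((|x|^{2m}+t)/R\bigr)^{l}$ together with an annular companion $\psi_R^*$ and runs the full Ikeda--Sobajima scheme uniformly in $p$. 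Your simpler argument is correct for $p<p_{\text{Fuj}}$ and gives the same exponent.

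The genuine gap is at $p=p_{\text{Fuj}}$, exactly where you hedge. As you describe it, H\"older on the annulus $A_j$ at scale $R_j=2^jR_0$ gives $c\,\varepsilon\lesssim\bigl(\iint_{A_j}|u|^p\bigr)^{1/p}$, almost-disjointness collapses $\sum_j\iint_{A_j}|u|^p$ into one space--time integral, and the uniform test-function bound $\iint|u|^p\varphi_R\le C$ controls it. But this chain yields only $N\,c\,\varepsilon^{p}\le C$, hence $\log T_\varepsilon\lesssim N\lesssim\varepsilon^{-p}$, one power of $\varepsilon$ off; no reshuffling of H\"older exponents fixes this, because you have thrown away the term $\iint|u|^p\varphi_{R_j}$ on the left of the localized inequality. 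The missing idea is to \emph{retain} it and feed it back. In the paper's notation, with $X(R)=\iint|u|^p\psi_R$ and $Y(R)=\iint|u|^p\psi_R^*$, H\"older at criticality reads $c\varepsilon+X(R)\lesssim Y(R)^{1/p}$; the key monotonicity $X(R)\gtrsim W(R):=\int_0^R Y(r)\,r^{-1}\,dr$ (the continuous form of your almost-disjointness) then turns this into
\[
R^{-1}\ \lesssim\ W'(R)\,\bigl(c\varepsilon+W(R)\bigr)^{-p}\ =\ \tfrac{1}{1-p}\,\tfrac{d}{dR}\bigl(c\varepsilon+W(R)\bigr)^{1-p},
\]
and integrating from $R_0$ to $T_\varepsilon$ gives $\log(T_\varepsilon/R_0)\lesssim\varepsilon^{-(p-1)}$. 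In discrete language, with $S_j=\sum_{k\le j}\iint_{A_k}|u|^p$ the inequality becomes $(c\varepsilon+S_{j-1})^{p}\lesssim S_j-S_{j-1}$, and the sum $\sum_j(S_j-S_{j-1})(c\varepsilon+S_{j-1})^{-p}$ telescopes to $O(\varepsilon^{1-p})$, forcing $N\lesssim\varepsilon^{-(p-1)}$. So the sharp exponent comes not from the uniform a~priori bound on $\iint|u|^p$, but from the Ikeda--Sobajima mechanism of accumulating the nonlinear mass on the left at every scale.
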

\begin{Rem} The results of Theorem \ref{lf} strengthen and extend those of Caristi et al. \cite{Caristi} and Sun \cite{SunFuqin} in the following ways:
\begin{itemize}
    \item[(i)] The results of Caristi and Mitidieri \cite{Caristi} and Sun \cite{SunFuqin} provided only an upper bound for the lifespan of solutions to problem~\eqref{main} under slowly decaying nonnegative initial data $u_0(x)$ satisfying \eqref{U}. In our setting, the above condition on the initial data is replaced by the assumption $u_0\in L^1(\mathbb{R}^n)\cap L^\infty(\mathbb{R}^n)$, which sharpens the results of the aforementioned works.
    \item[(ii)] Caristi and Mitidieri \cite{Caristi}, and Sun \cite{SunFuqin} established the upper bound for the lifespan 
$$T_{\varepsilon} \lesssim  \varepsilon^{-\left(\frac{1}{p-1}-\frac{\kappa}{2 m}\right)^{-1}},\,\kappa= \begin{cases}n<\kappa<2 m /(p-1), & 1<p<p_{\text {Fuj}}, \\{}\\ 0<\kappa<n, & p=p_{\text {Fuj}}.\end{cases}$$ In our setting, the above estimate becomes sharper in the subcritical regime $p<p_{Fuj},$ and in the critical case $p=p_{Fuj},$ it further improves to the bound $T_\varepsilon\simeq \exp\left(-\varepsilon^{-(p-1)}\right).$
\end{itemize}
\end{Rem}
\begin{Rem}
When $m=1$, estimate \eqref{LS} coincides with the lifespan estimates for the classical semilinear heat equation established by Lee and Ni in \cite{t2}. Consequently, our results include the Lee–Ni estimates as a particular instance. We also note that the authors of this paper have recently obtained in \cite{Torebek} an upper bound for the lifespan estimate \begin{equation*}
T_{\varepsilon} \lesssim \begin{cases}\varepsilon^{-\left(\frac{1}{p-1}-\frac{n}{4}\right)^{-1}}, & 1<p<1+\frac{4}{n}, \\{}\\ \exp \left(\varepsilon^{-(p-1)}\right), & p=1+\frac{4}{n},\end{cases}\end{equation*} for the semilinear biharmonic heat equation in an exterior domain $\Omega^c$, which is a particular case of \eqref{LS} when $\Omega^c\equiv\mathbb{R}^n$. To the best of the authors’ knowledge, estimate \eqref{LS} is new even for $m=2$, i.e., for the semilinear biharmonic heat equation $u_t+\Delta^2u=|u|^p.$
\end{Rem}

The proof of the upper bound for the lifespan relies on the test-function method developed by Ikeda and Sobajima \cite{Ikeda1}, together with an application of Faà di Bruno’s formula. In contrast, the lower-bound proof is obtained via a fixed-point argument that makes essential use of the properties of the heat kernel associated with the polyharmonic heat equation.

\section{Preliminaries}\label{section2}
We consider the Cauchy problem for the linear polyharmonic heat equation on
$\mathbb{R}^n$
\begin{equation*}
    \partial_t u + (-\Delta)^m u = 0,\,\,t>0, x\in\mathbb{R}^n,
    \qquad
    u(0,x)=u_0(x),\,\,x\in\mathbb{R}^n,
\end{equation*}
where $m\in\mathbb{N}$. The associated semigroup is denoted by
\[
    S_m(t)u_0 := e^{-t(-\Delta)^m}u_0.
\]
Taking the Fourier transform in space gives
\[
    \widehat{S_m(t)u_0}(\xi)
    =
    e^{-t|\xi|^{2m}}\widehat{u_0}(\xi),\,\xi\in\mathbb{R}^n.
\]
Thus $S_m(t)$ is a convolution operator,
\[
    S_m(t)u_0(x)
    =
    K_m(t,\cdot)*u_0(x),
\]
where the polyharmonic heat kernel is given by
\begin{equation}\label{m-heat-kernel}
    K_m(t,x)
    =
    \frac{1}{(2\pi)^n}
    \int_{\mathbb{R}^n}
    e^{ix\cdot \xi}
    e^{-t|\xi|^{2m}}
    \,d\xi .
\end{equation}
Since the integrand is radial in \(\xi\), we may write \(r=|x|\) and use the identity
\[
\int_{S^{n-1}} e^{i r\rho \cos\theta}\,d\omega
=
(2\pi)^{n/2}
(r\rho)^{1-n/2}
J_{n/2-1}(r\rho),
\]
where \(J_\nu\) is the Bessel function of the first kind.

\noindent Hence, we have
\[
{
K_m(t,x)
=
\frac{1}{(2\pi)^{n/2}}
|x|^{1-n/2}
\int_0^\infty
e^{-t\rho^{2m}}
\rho^{n/2}
J_{n/2-1}(|x|\rho)
\,d\rho.
}
\]

Below we present the $L^q-L^r$-boundedness of the polyharmonic heat semigroup.
\begin{Prop}\label{estimate}
Let \(f\in L^q(\mathbb R^n)\). Then there exists a constant \(C>0\) such that
$$
\left\|e^{-t(-\Delta)^m} f\right\|_{L^r(\mathbb{R}^n)} \leq C t^{-\frac{n}{2 m}\left(\frac{1}{q}-\frac{1}{r}\right)}\|f\|_{L^q(\mathbb{R}^n)}
$$
for all $1 \leq q \leq r \leq \infty$, and $t>0.$
\end{Prop}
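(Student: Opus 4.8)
The plan is to realize $e^{-t(-\Delta)^m}$ as convolution against an explicit, scaling-structured kernel and then to invoke Young's convolution inequality. First I would set $\Phi_m:=\mathcal{F}^{-1}\big(e^{-|\cdot|^{2m}}\big)$ and define $G_m(\cdot,t)$ by $\widehat{G_m(\cdot,t)}(\xi)=e^{-t|\xi|^{2m}}$, so that $e^{-t(-\Delta)^m}f=G_m(\cdot,t)*f$ (for $f$ Schwartz, and then by density for $f\in L^p$ with $p<\infty$, while for $p=\infty$ the convolution is directly meaningful since $G_m(\cdot,t)\in L^1$). The key structural observation is that $2m$ is an even integer, so $\xi\mapsto|\xi|^{2m}=(\xi_1^2+\dots+\xi_n^2)^m$ is a polynomial; hence $e^{-|\xi|^{2m}}$ is a Schwartz function, and therefore so is $\Phi_m$. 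In particular $\Phi_m\in L^r(\mathbb{R}^n)$ for every $1\le r\le\infty$.

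Next I would record the scaling identity. From $\widehat{G_m(\cdot,t)}(\xi)=e^{-t|\xi|^{2m}}=\widehat{\Phi_m}\big(t^{1/(2m)}\xi\big)$ and the dilation rule for the Fourier transform, one obtains $G_m(x,t)=t^{-n/(2m)}\Phi_m\big(t^{-1/(2m)}x\big)$. A change of variables $y=t^{-1/(2m)}x$ then yields, for every $1\le r\le\infty$,
$$
\|G_m(\cdot,t)\|_{L^r}=t^{-\frac{n}{2m}}\big\|\Phi_m\big(t^{-1/(2m)}\cdot\big)\big\|_{L^r}=t^{-\frac{n}{2m}}\cdot t^{\frac{n}{2mr}}\|\Phi_m\|_{L^r}=\|\Phi_m\|_{L^r}\,t^{-\frac{n}{2m}\left(1-\frac1r\right)}.
$$

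Finally I would apply Young's inequality. Given $1\le p\le q\le\infty$, define $r\in[1,\infty]$ by $\frac1r=1-\frac1p+\frac1q$; this is admissible precisely because $0\le\frac1p-\frac1q\le 1$, which holds since $1\le p\le q\le\infty$. Young's convolution inequality and the previous display give
$$
\big\|e^{-t(-\Delta)^m}f\big\|_{L^q}=\|G_m(\cdot,t)*f\|_{L^q}\le\|G_m(\cdot,t)\|_{L^r}\|f\|_{L^p}=\|\Phi_m\|_{L^r}\,t^{-\frac{n}{2m}\left(\frac1p-\frac1q\right)}\|f\|_{L^p},
$$
using $1-\frac1r=\frac1p-\frac1q$; this is the claim, with a constant $C$ controlled by $\|\Phi_m\|_{L^1}$ and $\|\Phi_m\|_{L^\infty}$ (via $\|\Phi_m\|_{L^r}\le\|\Phi_m\|_{L^1}^{1/r}\|\Phi_m\|_{L^\infty}^{1-1/r}$).

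The only genuinely delicate point is the assertion that $\Phi_m$ is an honest Schwartz function rather than merely a tempered distribution — this is where the higher-order structure could, in principle, cause trouble, since for $m>1$ the kernel $\Phi_m$ changes sign and has no probabilistic (heat-kernel) interpretation. However, the argument above sidesteps the issue completely: integrability of $e^{-|\xi|^{2m}}$ together with the rapid decay of all of its derivatives (a consequence of $|\xi|^{2m}$ being a polynomial) already forces $\Phi_m\in\mathcal{S}(\mathbb{R}^n)\subset\bigcap_{1\le r\le\infty}L^r(\mathbb{R}^n)$. Everything else is the scaling bookkeeping and a direct appeal to Young's inequality, so no substantial obstacle remains.
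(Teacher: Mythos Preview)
Your argument is correct and is the standard route to this estimate: identify $e^{-t(-\Delta)^m}$ as convolution with the self-similar kernel $G_m(x,t)=t^{-n/(2m)}\Phi_m(t^{-1/(2m)}x)$, compute $\|G_m(\cdot,t)\|_{L^r}$ by scaling, and apply Young's inequality with $\tfrac1r=1-\tfrac1p+\tfrac1q$. The observation that $|\xi|^{2m}$ is a polynomial (so $e^{-|\xi|^{2m}}\in\mathcal{S}$ and hence $\Phi_m\in\mathcal{S}\subset\bigcap_r L^r$) is exactly the right way to handle the potentially delicate point you flag.

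As for comparison with the paper: there is nothing to compare. The paper does not give a proof of this proposition at all; it simply states that the estimate is well established, cites \cite[Prop.~6.1]{slim}, and omits the argument. Your write-up therefore supplies what the paper leaves to the literature, and does so by precisely the method one would expect the cited reference to use.
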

It is very likely that the preceding statement is already established. For instance, a comparable result for $m=2$ was presented in \cite[Prop. 6.1]{slim}. Nevertheless, for the reader’s convenience, we include the proof of Proposition \ref{estimate} below.
\begin{proof}[Proof of Proposition \ref{estimate}]
The polyharmonic heat kernel satisfies the scaling relation
\begin{equation}
    K_m(t,x)
    =
    t^{-\frac{n}{2m}}
    K_m\left(1,t^{-\frac{1}{2m}}x\right).
\end{equation}
Consequently, for every $1\le k\le \infty$, there exists a constant
$C_k>0$ such that
\begin{equation}
    \|K_m(t,\cdot)\|_{L^k(\mathbb{R}^n)}
    \le
    C_k
    t^{-\frac{n}{2m}\left(1-\frac1k\right)}.
\end{equation}

We now prove the standard $L^q$--$L^r$ estimate. Let
\[
    1\le q\le r\le \infty.
\]
Choose $k\in[1,\infty]$ such that
\begin{equation}
    1+\frac1r=\frac1q+\frac1k.
\end{equation}
By Young's convolution inequality, we have
\begin{align*}\|S_m(t)f\|_{L^r(\mathbb{R}^n)}&
    =
    \|K_m(t,\cdot)*f\|_{L^r(\mathbb{R}^n)}
   \\& \le
    \|K_m(t,\cdot)\|_{L^k(\mathbb{R}^n)}
    \|f\|_{L^q(\mathbb{R}^n)}.\end{align*}
Since
\[
    1-\frac1k=\frac1q-\frac1r,
\]
we obtain
\begin{equation}
    \|S_m(t)f\|_{L^r(\mathbb{R}^n)}
    \le
    C
    t^{-\frac{n}{2m}\left(\frac1q-\frac1r\right)}
    \|f\|_{L^q(\mathbb{R}^n)}
\end{equation}
for all $t>0$, where $C>0$ may depend only on $n,m,q,r$. The proof is complete.
\end{proof}

Let us now define the cut-off functions 
\begin{equation}\label{psi}
  \psi_R(x, t):=\left[\phi\left(\frac{|x|^{2m}+t}{R}\right)\right]^{2mp^{\prime}}, \quad \psi_R^*(x, t):=\left[\phi^*\left(\frac{|x|^{2m}+t}{R}\right)\right]^{2mp^{\prime}}, 
\end{equation}

\noindent for all $(x, t) \in Q:=(0, \infty) \times \mathbb{R}^n$, where  $\phi \in C^{\infty}([0, \infty)) $ satisfies 
$$
\phi(s)=\left\{\begin{array}{ll}
1 & \text { if } s \in[0,1 / 2], \\
\searrow & \text { if } s \in(1 / 2,1), \\
0 & \text { if } s \in[1, \infty),
\end{array}\right. \quad \phi^*(s)= \begin{cases}0 & \text { if } s \in[0,1 / 2), \\
\phi(s) & \text { if } s \in[1 / 2, \infty).\end{cases}$$

We now state a result concerning certain properties of the cutoff functions.
\begin{Lem}\label{lifespanes}
 Let $\psi_R$ and $\psi_R^*$ be the cutoff functions defined in \eqref{psi}. For all sufficiently large $R>0$, there exists a constant $C>0$ such that
$$
\left|\partial_t \psi_R(t, x)\right|+\left|\Delta^m \psi_R(t, x)\right| \leq C R^{-1}\left[\psi_R^*(t, x)\right]^{1 / p}, \quad(t, x) \in Q.
$$
\end{Lem}
\begin{proof}
    Let us introduce  scaled space–time variables
    $$y=\frac{x}{R^{1 /(2 m)}}, \quad \tau=\frac{t}{R}, \quad s(y,\tau):=|y|^{2m}+\tau, \quad s_R(x,t)=\frac{|x|^{2 m}+t}{R}.$$
The differential operators scale as
$$
\begin{aligned}
\partial_t \psi_R(s_R)=R^{-1} \partial_{\tau}\psi(s), \quad \partial_{x_i}\psi_R(s_R)=R^{-1 /(2 m)} \partial_{y_i}\psi(s),
\end{aligned}
$$
and therefore
\begin{equation}\label{operator}
  \Delta_x^m \psi_R(s_R)=R^{-1} \Delta_{y}^m\psi(s).  
\end{equation}
Let us briefly compute the time derivative
    $$
\begin{aligned}
\left|\partial_t \psi_R(t, x)\right|=R^{-1} \left|\partial_{\tau}\psi(s)\right| &=2m p^{\prime} R^{-1} \left[\phi\left(s_R\right)\right]^{2mp^{\prime}-1} \phi^{\prime}\left(s_R\right)\\
&\leq CR^{-1} \left[\phi^*\left(s_R\right)\right]^{\frac{2m p^{\prime}}{p}} [\phi\left(s_R\right)]^{2m-1}\left|\phi^{\prime}\left(s_R\right)\right| \\
& \leq CR^{-1}\left[\psi_R^*(t, x)\right]^{\frac{1}{p}} .
\end{aligned}
$$
The estimate of the test function with the polyharmonic operator begins by viewing
\[
\psi(s)=(h\circ g)(s),\qquad g(s)=\phi(s),\qquad h(z)=z^{2mp'},
\]
as a composition. By the multivariable version of the Faà di Bruno formula (\cite{Ozawa}
Proposition A.1), for any multi-index \(\alpha\) with \(|\alpha|\ge1\) one has
$$
\partial_y^\alpha(\psi(s))=\alpha!\sum_{k=1}^{|\alpha|} \sum_{\substack{\gamma_1+\cdots+\gamma_k\le\alpha\\
|\gamma_1|+\cdots+|\gamma_k|=|\alpha|}} \frac{1}{k!}\left( h^{(k)}(g(s))\right) \prod_{i=1}^k \frac{\partial_y^{\gamma_i}(g(s))}{\gamma_i!}.
$$
By applying the chain rule to each factor, we obtain
\[
\partial_y^{\gamma_i}(g(s))=\sum_{r=1}^{|\gamma_i|} g^{(r)}(s)
\sum_{\substack{\beta_1+\cdots+\beta_r\le\gamma_i\\
|\beta_1|+\cdots+|\beta_r|=|\gamma_i|}} C_{\gamma_i,\beta}\prod_{j=1}^r\partial_y^{\beta_j}s,
\]
where \(s=s(y,\tau)=|y|^{2m}+\tau\). Since \(\phi\in C_c^\infty\), all derivatives \(g^{(r)}(s)=\phi^{(r)}(s)\) are bounded, and each derivative \(\partial_y^{\beta}s\) is a fixed polynomial in \(y,\tau\). Thus, every term in the Faà–di Bruno expansion is bounded by 
$$
\left|\partial_y^\alpha(\phi(s))^{2mp'}\right| \leq C_{\alpha,\gamma,\beta} \sum_{k=1}^{|\alpha|} 2mp'(2mp'-1) \ldots(2mp'-k+1)(\phi(s))^{2mp'-k}.
$$
Therefore, as  $2mp'-k \geq 2mp'-|\alpha|$ for all $1 \leq k \leq|\alpha|$, we get
\begin{equation}\label{tuindi}
    \left|\partial_y^\alpha(\phi(s))^{2mp'}\right| \leq C_{\alpha,m,p}(\phi(s))^{2mp'-|\alpha|}, \quad \text { for all } |y| \leq 1.
\end{equation}
Taking \(|\alpha|=2m\) in \eqref{tuindi} and using \eqref{operator}, we obtain
\[
\begin{aligned}
\big|\Delta_x^m\psi_R(s_R)\big|
&=R^{-1}\big|\Delta_y^m\psi(s)\big|\\
& \leq C_m  R^{-1} \sum_{|\alpha|=m}\left|\partial_y^{2 \alpha}(\phi(s))^{2mp'}\right|\\
&\le C_{\alpha,m,p} R^{-1}\,\phi(s)^{2mp'-2m}\\
&\le C_{\alpha,m,p} R^{-1}\,[\psi_R^*(s_R)]^{1/p},
\end{aligned}
\]
where the last inequality follows from the definitions of \(\psi_R=\phi^{2mp'}(s_R)\) and the auxiliary cutoff function \(\psi_R^*(s_R)\) defined in \eqref{psi}. This completes the proof.
\end{proof}

\section{Proof of Theorem \ref{lf}}
\textit{$\bullet$ Lower estimate.} To establish a lower bound of the lifespan, we employ the standard contraction mapping principle.
We first introduce the auxiliary function
$$
M(t):=\sup _{\tau \in[0, t)} \left\{(1+\tau)^{\frac{n}{2m}}\left\|u(\tau)\right\|_{L^{\infty}}+\left\|u(\tau)\right\|_{L^1}\right\}.
$$
Starting from the mild solution representation \eqref{mild} and using the $L^{1}-L^{1}$ estimate, we derive
$$
\begin{aligned}
  \left\|u(t)\right\|_{L^1} \leq C \varepsilon\left\|u_{0}\right\|_{L^1}+C \int_0^t\left\|u(\tau)\right\|_{L^{p}}^{p} d \tau. 
\end{aligned}
$$
By using  Hölder’s inequality
$$\|u(\tau)\|_{L^p}^p \leq\|u(\tau)\|_{L^1}\|u(\tau)\|_{L^\infty}^{p-1},$$
and the definition of $M(t)$, we get
$$
\begin{aligned}
  \left\|u(t)\right\|_{L^1}  \leq  C \varepsilon+C M(t)^{p}\int_0^t(1+\tau)^{-\frac{n(p-1)}{2m}}  d \tau.
\end{aligned}
$$
We now  derive an estimate for $\left\|u(t)\right\|_{L^{\infty}}$. When $0<t \leq 1$, the $L^{\infty}-L^{\infty}$ estimate applied to the mild formulation leads to
$$
\begin{aligned}
(1+t)^{\frac{n}{2m}}\left\|u(t)\right\|_{L^{\infty}} & \leq C \varepsilon \left\|u_{0}\right\|_{L^{\infty}}+C \int_0^t\left\|u(\tau)\right\|_{L^{\infty}}^{p} d \tau \\
& \leq C \varepsilon+C \int_0^1(1+\tau)^{-\frac{np}{2m}} M(\tau)^{p} d \tau \\
& \leq C \varepsilon +C M(t)^{p}\int_0^1(1+\tau)^{-\frac{np}{2m}}  d \tau\\
& = C \varepsilon +C M(t)^{p},
\end{aligned}
$$
where we used the definition of $M(t)$. When $t \geq 1$, the $L^1-L^{\infty}$ estimate gives

$$
\begin{aligned}
(1+t)^{\frac{n}{2m}}\left\|u(t)\right\|_{L^{\infty}} &\leq  C \varepsilon\left\|u_{0}\right\|_{L^1}  +C(1+t)^{\frac{n}{2m}} \int_0^{t / 2}(t-\tau)^{- \frac{n}{2m}}\left\|\left|u(\tau)\right|^{p}\right\|_{L^1} d \tau \\
& \qquad +C(1+t)^{\frac{n}{2m}} \int_{t / 2}^t\left\|u(\tau)\right\|_{L^{\infty}}^{p} d \tau \\
&\leq C\varepsilon\left\|u_{0}\right\|_{L^1} +C \int_0^{t / 2}(1+\tau)^{-\frac{n(p-1)}{2m}} M(\tau)^{p} d \tau \\
&\qquad +C(1+t)^{\frac{n}{2m}} \int_{t / 2}^t(1+\tau)^{-\frac{np}{2m}} M(\tau)^{p} d \tau \\
& \leq C\varepsilon\left\|u_{0}\right\|_{L^1}+C  M(t)^{p}\int_0^t(1+\tau)^{-\frac{n(p-1)}{2m}}  d \tau.
\end{aligned}
$$
Collecting both the $L^1$  and $L^{\infty}$ estimates, we arrive at the following inequality:
\begin{equation}\label{M-key}
M(t) \leq C_0 \varepsilon +C_1 M(t)^{p}\int_0^t(1+\tau)^{-\frac{n(p-1)}{2m}}  d \tau
\end{equation}
for some constants $C_0, C_1>0$.
Let $T_1$ denote the smallest time such that $M(t)=2C_0\varepsilon$. Then, substituting $t=T_1$ in the above inequality, we have
$$
 \varepsilon \leq \begin{cases}C\varepsilon^{p} (1+T_1)^{(\frac{1}{p-1}-\frac{n}{2m})(p-1)}  & \text { if } 1<p<p_{\text {Fuj }}  \text {, } \\{}\\
C  \varepsilon^{p} \log(T_1+1) & \text { if } p=p_{\text {Fuj }},\end{cases}
$$
which, together with the smallness of $ \varepsilon$, implies 
$$
T_{\varepsilon}\geq T_1 \geq \begin{cases}C\varepsilon^{\left(-\frac{1}{p-1}-\frac{n}{2m}\right)^{-1}} & \text { if } 1<p<p_{\text {Fuj }}  \text {, }  \\{}\\
\operatorname{exp}\left(C\varepsilon^{-(p-1)}\right) & \text { if } p=p_{\text {Fuj }}.\end{cases}
$$

If \(p>p_{\text{Fuj}}\), then \(\frac{n(p-1)}{2m}>1\), so that
\[
\int_0^t (1+\tau)^{-\frac{n(p-1)}{2m}}\,d\tau < \infty,\,\, \text{for all}\,\,\,t>0.
\]
Consequently, \eqref{M-key} gives the uniform bound
\[
M(t)\le C_0\varepsilon + C_4 M(t)^p,\qquad t\ge 0.
\]
Thus, by a standard fixed-point argument, for sufficiently small
\(\varepsilon>0\), the associated mild solution is global; that is,
\[
T_\varepsilon=+\infty .
\]

\textit{$\bullet$ Upper estimate.} The upper bound estimate is derived by employing the approach proposed by Ikeda and Sobajima \cite{Ikeda1}, suitably adapted to the problem \eqref{main}.
\noindent We now focus on the representation of the mild solution.
Recalling equation \eqref{mild}, we multiply both sides by the test function $\psi_R$ and integrate over $\mathbb{R}^n$. This gives
\begin{equation*}\begin{split}
\int_{\mathbb{R}^n}u\psi_R dx&= \varepsilon\int_{\mathbb{R}^n}e^{-t(-\Delta)^m}u_0(x)\psi_R(x,0) dx\\&+ \int_{\mathbb{R}^n}\int_0^t e^{-(t-s)(-\Delta)^m}|u(s)|^p  ds\psi_R(x,t) dx.\end{split}
\end{equation*}
Differentiating both sides of the previous identity with respect to $t$, we obtain
\begin{equation*}
\begin{aligned}
    \frac{d}{dt}\int_{\mathbb{R}^n}u\psi_R dx&=\varepsilon\int_{\mathbb{R}^n}\frac{d}{dt}\left(e^{-t(-\Delta)^m}u_0\psi_R\right) dx\\&+\int_{\mathbb{R}^n}\frac{d}{dt}\left(\int_0^t e^{-(t-s)(-\Delta)^m}|u|^p  ds\psi_R \right)dx.
\end{aligned}
    \end{equation*}
We now compute each term separately. For the first one, we get
\begin{equation}\label{m1}
\begin{aligned}
    \varepsilon\int_{\mathbb{R}^n}\frac{d}{dt}&\left(e^{-t(-\Delta)^m}u_0\psi_R\right) dx
        \\& \qquad =-\varepsilon\int_{\mathbb{R}^n} \{(-\Delta)^m e^{-t (-\Delta)^m} u_0\} \psi_R dx+\varepsilon\int_{\mathbb{R}^n} e^{-t (-\Delta)^m} u_0 \partial_t\psi_R dx \\
& \qquad =-\varepsilon\int_{\mathbb{R}^n} e^{-t (-\Delta)^m} u_0(-\Delta)^m \psi_Rdx+\varepsilon\int_{\mathbb{R}^n} e^{-t (-\Delta)^m} u_0 \partial_t\psi_R dx,
\end{aligned}
    \end{equation}
where integration by parts has been applied in the first term. Similarly, for the second term, we have
\begin{equation}\label{m2}
\begin{aligned}
    \int_{\mathbb{R}^n}\frac{d}{dt}&\left(\int_0^t e^{-(t-s)(-\Delta)^m}|u|^p  ds\psi_R \right)dx\\
    &=\int_{\mathbb{R}^n} |u(t)|^p \psi_R dx-\int_{\mathbb{R}^n} \left(\int_0^t e^{-(t-s) (-\Delta)^m}\left(|u(s)|^p\right) d s\right)(-\Delta)^m\psi_R dx \\
&   +\int_{\mathbb{R}^n} \left(\int_0^t e^{-(t-s) (-\Delta)^m}\left(|u(s)|^p\right) d s\right)\partial_t \psi_R dx.
\end{aligned}
    \end{equation}
Combining \eqref{m1} and \eqref{m2}, we obtain
\begin{equation*}
\begin{aligned}
    \frac{d}{dt}\int_{\mathbb{R}^n}u\psi_R dx&-\int_{\mathbb{R}^n} |u(t)|^p \psi_R dx\\& =\int_{\mathbb{R}^n} \left(e^{-t (-\Delta)^m} u_0+\int_0^t e^{-(t-s) (-\Delta)^m}\left(|u(s)|^p\right) d s\right)\partial_t \psi_R dx\\
    &-\int_{\mathbb{R}^n} \left(e^{-t (-\Delta)^m} u_0+\int_0^t e^{-(t-s) (-\Delta)^m}\left(|u(s)|^p\right) d s\right)(-\Delta)^m\psi_Rdx.
\end{aligned}
    \end{equation*}
Since $u(t)$ is a mild solution \eqref{mild}, the right-hand side can be expressed more compactly as
\begin{equation*}
\begin{aligned}
    \frac{d}{dt}\int_{\mathbb{R}^n}u\psi_R dx-\int_{\mathbb{R}^n} |u(t)|^p\psi_Rdx =-\int_{\mathbb{R}^n} u(-\Delta)^m\psi_R dx+\int_{\mathbb{R}^n} u\partial_t \psi_R dx.
\end{aligned}
    \end{equation*}
Integrating this equality over the interval $[0,T]$ and noting that $\psi_R(\cdot,T)=0$, we obtain the weak identity
\begin{equation}\label{weak}
\begin{aligned}
    \varepsilon\int_{\mathbb{R}^n}u_0\psi_R(x,0) dx+\int_{Q_T} &|u|^p\psi_R dxdt=\int_{Q_T} u(-\Delta)^m\psi_R dxdt-\int_{Q_T} u\partial_t \psi_R dxdt,
\end{aligned}
    \end{equation}
where $Q_T:=[0,T] \times \mathbb{R}^n.$ From Lemma \ref{lifespanes}, we deduce
\begin{equation}\label{after}
  \begin{aligned}
\int_{Q_T}|u|^p \psi_R dx dt+ \varepsilon \int_{\mathbb{R}^n} u_0(x) \psi_R(0, x) dx  &\leq\int_{Q_T}|u|\left(\left|\partial_t \psi_R\right|+\left|(-\Delta)^m \psi_R\right|\right) dx dt \\
& \leq R^{-1}\int_{Q_T}|u|\left(\psi_R^*\right)^{\frac{1}{p}} dx dt. 
\end{aligned}  
\end{equation}
Applying Hölder’s inequality to the right-hand side, we obtain
\begin{equation*}
  \begin{aligned}
 R^{-1} \int_{Q_T}|u|\left(\psi_R^*\right)^{\frac{1}{p}} dx dt
& \leq R^{-1}\left(\int_{Q_T}|u|^p \psi_R^* dx dt\right)^{\frac{1}{p}}\left(\int_0^R \int_{|x|<R^{\frac{1}{2m}}} dx dt\right)^{\frac{1}{p^{\prime}}}\\
& \leq CR^{-\frac{p-1}{p}\left(\frac{1}{p-1}-\frac{n}{2m}\right)}\left(\int_{Q_T}|u|^p \psi_R^* dx dt\right)^{\frac{1}{p}}
\end{aligned}  
\end{equation*}
which holds for any $R \in\left(2 R_0, T\right)$. Consequently, we arrive at
\begin{equation}\label{without}
    \begin{aligned}
\int_{Q_T}|u|^p \psi_R dx dt+ &\varepsilon \int_{\mathbb{R}^n} u_0(x) \psi_R(0, x) dx \\&\leq  CR^{-\frac{p-1}{p}\left(\frac{1}{p-1}-\frac{n}{2m}\right)}\left(\int_{Q_T}|u|^p \psi_R^* dx dt\right)^{\frac{1}{p}}.
\end{aligned} 
\end{equation}
We now introduce the auxiliary quantities
$$
X(r):=\int_{Q_T}|u(t, x)|^p \psi_r(t, x) dx dt,$$ and $$Y(r):=\int_{Q_T}|u(t, x)|^p \psi_r^*(t, x) dx dt
$$
and define
$$
W(R):=\int_0^R Y(r) r^{-1} d r.
$$
By direct differentiation, it follows that $W'(R)=\tfrac{1}{R}Y(R)$. Using inequality \eqref{after}, we can express it in terms of these new quantities as
 \begin{equation}\label{new}
     X(R)+C\varepsilon  \lesssim R^{-\frac{p-1}{p}\left(\frac{1}{p-1}-\frac{n}{2m}\right)} Y(R)^{\frac{1}{p}},
 \end{equation}
where $C>0$ is constant comes from the fact that  $\int_{\mathbb{R}^n}u_0(x)>0$. Furthermore, from the properties of the cut-off functions (see Georgiev and Palmieri \cite[Lemma 1, p. 1011]{Palmieri}), we have the relation
\begin{equation}\label{Y(R)}
\frac{2}{\log 2} \int_0^R Y(r) \frac{\mathrm{d} r}{r} \leq X(R).
\end{equation}
Combining \eqref{new}–\eqref{Y(R)} yields
$$
\frac{2 W(R)}{\log 2}+C\varepsilon   \lesssim R^{-\frac{p-1}{p}\left(\frac{1}{p-1}-\frac{n}{2m}\right)+\frac{1}{p}}\left(W^{\prime}(R)\right)^{\frac{1}{p}}.
$$
This leads to the differential inequality
$$
 R^{\left(\frac{1}{p-1}-\frac{n}{2m}\right)(p-1)-1} \leq CW^{\prime}(R)\left(C\varepsilon +\frac{2 W(R)}{\log 2}\right)^{-p}.
$$
 Finally, note that the right-hand side can be rewritten in differential form as
\begin{equation}\label{qqq}
     R^{\left(\frac{1}{p-1}-\frac{n}{2m}\right)(p-1)-1} \leq \frac{C\log 2}{2(1-p)}\frac{d}{dR}\left(C\varepsilon +\frac{2 W(R)}{\log 2}\right)^{1-p}.
\end{equation}
To proceed, we integrate inequality \eqref{qqq} over the interval $\left[2 R_0, T\right]$. We obtain the following estimates for the left-hand side:
$$
\begin{aligned}
 \int_{2 R_0}^T R^{\left(\frac{1}{p-1}-\frac{n}{2m}\right)(p-1)-1} dR = \begin{cases}T^{\left(\frac{1}{p-1}-\frac{n}{2m}\right)(p-1)}-\left(2 R_0\right)^{\left(\frac{1}{p-1}-\frac{n}{2m}\right)(p-1)} & \text { if } 1<p<p_{\text {Fuj}}, \\{}\\
\log \left(\frac{T}{2 R_0}\right) & \text { if } p=p_{\mathrm{Fuj}}.\end{cases}
\end{aligned}
$$
For the right-hand side, integrating by parts yields
$$
\begin{aligned}
\frac{C\log 2}{2(1-p)} \int_{2 R_0}^T \frac{d}{dR}\left(C\varepsilon +\frac{2 W(R)}{\log 2}\right)^{1-p} dR \leq  C\varepsilon^{1-p} .
\end{aligned}
$$
After integrating \eqref{qqq} and taking $\varepsilon>0$ sufficiently small for the subcritical case $p<p_{\text {Fuj}}$, we obtain 
$$ 
T^{\left(\frac{1}{p-1}-\frac{n}{2m}\right)(p-1)} \leq C \varepsilon^{-(p-1)},$$ which implies $$T\lesssim \varepsilon^{-\left(\frac{1}{p-1}-\frac{n}{2m}\right)^{-1}}.
$$
When $p=p_{Fuj}$, we instead have

$$
\log \left(\frac{T}{2 R_0}\right) \leq \frac{\log 2}{2(p-1)}C^{1-p} \varepsilon^{-(p-1)},$$ hence $$T\lesssim \exp \left( \varepsilon^{-(p-1)}\right).
$$
Therefore, the statement of the theorem follows. 

\section*{Acknowledgments}
The authors are deeply grateful to A. Palmieri (Italy) and S. Tayachi (Tunisia) for their insightful recommendations on references concerning lifespan estimates for the parabolic equations. The authors also thank the anonymous reviewer for their thorough and thoughtful evaluation, as well as for the valuable comments that greatly enhanced the clarity and overall quality of this work.

\section*{Funding}
This research has been funded by the Science Committee of the Ministry of Education and Science of the Republic of Kazakhstan (Grant No. AP23483960).

\section*{Declaration of competing interest}  The authors declare that there is no conflict of interest.

\section*{Data Availability Statements} The manuscript has no associated data.

\bibliographystyle{ieeetr.bst}
\bibliography{references}

@book{mon,
  title={Spatial patterns: higher order models in physics and mechanics},
  author={Peletier, Lambertus A and Troy, William C},
  volume={45},
  year={2012},
  publisher={Springer Science \& Business Media}
}

@article {Alsaedi,
    AUTHOR = {Alsaedi, Ahmed and Ahmad, Bashir and Kirane, Mokhtar and
              Nabti, Aberrazak},
     TITLE = {Lifespan of solutions of a fractional evolution equation with
              higher order diffusion on the {H}eisenberg group},
   JOURNAL = {Electron. J. Differential Equations},
  FJOURNAL = {Electronic Journal of Differential Equations},
      YEAR = {2020},
     PAGES = {Paper No. 2, 10},
   MRCLASS = {35R03 (35B30 35B44 35R11)},
  MRNUMBER = {4055595},
}

@article {SunShi,
    AUTHOR = {Sun, Fuqin and Shi, Peihu},
     TITLE = {Global existence and non-existence for a higher-order
              parabolic equation with time-fractional term},
   JOURNAL = {Nonlinear Anal.},
  FJOURNAL = {Nonlinear Analysis. Theory, Methods \& Applications. An
              International Multidisciplinary Journal},
    VOLUME = {75},
      YEAR = {2012},
    NUMBER = {10},
     PAGES = {4145--4155},
      ISSN = {0362-546X},
   MRCLASS = {35K30 (35A01 35B44 35K91 35R09)},
  MRNUMBER = {2914600},
       DOI = {10.1016/j.na.2012.03.005},
       URL = {https://doi.org/10.1016/j.na.2012.03.005},
}

@article{Fino2,
author = {Ahmad Z. Fino},
title = {Blow-up rates for a higher-order semilinear parabolic equation with nonlinear memory term},
journal = {Applicable Analysis},
volume = {101},
number = {14},
pages = {4775--4792},
year = {2022},
publisher = {Taylor \& Francis},
doi = {10.1080/00036811.2020.1869945},


URL = { 
    
        https://doi.org/10.1080/00036811.2020.1869945
    
    

},
eprint = { 
    
        https://doi.org/10.1080/00036811.2020.1869945
    
    

}

}

@article {Tayachi,
    AUTHOR = {Tayachi, Slim and Weissler, Fred B.},
     TITLE = {New life-span results for the nonlinear heat equation},
   JOURNAL = {J. Differential Equations},
  FJOURNAL = {Journal of Differential Equations},
    VOLUME = {373},
      YEAR = {2023},
     PAGES = {564--625},
      ISSN = {0022-0396},
   MRCLASS = {35K55 (35B30 35B44)},
  MRNUMBER = {4622523},
MRREVIEWER = {Hongwei Chen},
       DOI = {10.1016/j.jde.2023.07.011},
       URL = {https://doi.org/10.1016/j.jde.2023.07.011},
}

@article {Zaag,
    AUTHOR = {Ghoul, Tej-Eddine and Nguyen, Van Tien and Zaag, Hatem},
     TITLE = {Construction of type {I} blowup solutions for a higher order
              semilinear parabolic equation},
   JOURNAL = {Adv. Nonlinear Anal.},
  FJOURNAL = {Advances in Nonlinear Analysis},
    VOLUME = {9},
      YEAR = {2020},
    NUMBER = {1},
     PAGES = {388--412},
      ISSN = {2191-9496},
   MRCLASS = {35K30 (35B35 35B40 35B44 35K57 35K91)},
  MRNUMBER = {3962631},
       DOI = {10.1515/anona-2020-0006},
       URL = {https://doi.org/10.1515/anona-2020-0006},
}

@article {Torebek,
    AUTHOR = {Tobakhanov, Nurdaulet N. and Torebek, Berikbol T.},
     TITLE = {On the critical behavior for the semilinear biharmonic heat
              equation with forcing term in exterior domain},
   JOURNAL = {J. Differential Equations},
  FJOURNAL = {Journal of Differential Equations},
    VOLUME = {451},
      YEAR = {2026},
     PAGES = {Paper No. 113758, 53},
      ISSN = {0022-0396},
   MRCLASS = {35K25 (35A01 35B33)},
  MRNUMBER = {4957700},
       DOI = {10.1016/j.jde.2025.113758},
       URL = {https://doi.org/10.1016/j.jde.2025.113758},
}

@article {Caristi,
    AUTHOR = {Caristi, Gabriella and Mitidieri, Enzo},
     TITLE = {Existence and nonexistence of global solutions of higher-order
              parabolic problems with slow decay initial data},
   JOURNAL = {J. Math. Anal. Appl.},
  FJOURNAL = {Journal of Mathematical Analysis and Applications},
    VOLUME = {279},
      YEAR = {2003},
    NUMBER = {2},
     PAGES = {710--722},
      ISSN = {0022-247X,1096-0813},
   MRCLASS = {35K30 (35B40 35K55)},
  MRNUMBER = {1974056},
       DOI = {10.1016/S0022-247X(03)00062-3},
       URL = {https://doi.org/10.1016/S0022-247X(03)00062-3},
}

@article {Egor,
    AUTHOR = {Egorov, Yuri V. and Galaktionov, Victor A. and Kondratiev,
              Vladimir A. and Pohozaev, Stanislav I.},
     TITLE = {On the necessary conditions of global existence to a
              quasilinear inequality in the half-space},
   JOURNAL = {C. R. Acad. Sci. Paris S\'er. I Math.},
  FJOURNAL = {Comptes Rendus de l'Acad\'emie des Sciences. S\'erie I.
              Math\'ematique},
    VOLUME = {330},
      YEAR = {2000},
    NUMBER = {2},
     PAGES = {93--98},
      ISSN = {0764-4442},
   MRCLASS = {35R45},
  MRNUMBER = {1745181},
       DOI = {10.1016/S0764-4442(00)00124-5},
       URL = {https://doi.org/10.1016/S0764-4442(00)00124-5},
}

@article {SunFuqin,
    AUTHOR = {Sun, Fuqin},
     TITLE = {Life span of blow-up solutions for higher-order semilinear
              parabolic equations},
   JOURNAL = {Electron. J. Differential Equations},
  FJOURNAL = {Electronic Journal of Differential Equations},
      YEAR = {2010},
     PAGES = {No. 17, 9},
      ISSN = {1072-6691},
   MRCLASS = {35K60 (35B44)},
  MRNUMBER = {2592002},
}

@article {poly,
    AUTHOR = {Galaktionov, V. A. and Pohozaev, S. I.},
     TITLE = {Existence and blow-up for higher-order semilinear parabolic
              equations: majorizing order-preserving operators},
   JOURNAL = {Indiana Univ. Math. J.},
  FJOURNAL = {Indiana University Mathematics Journal},
    VOLUME = {51},
      YEAR = {2002},
    NUMBER = {6},
     PAGES = {1321--1338},
      ISSN = {0022-2518},
   MRCLASS = {35K55 (35B40)},
  MRNUMBER = {1948452},
MRREVIEWER = {Julio D. Rossi},
       DOI = {10.1512/iumj.2002.51.2131},
       URL = {https://doi.org/10.1512/iumj.2002.51.2131},
}

@article {Fujita,
    AUTHOR = {Fujita, Hiroshi},
     TITLE = {On the blowing up of solutions of the {C}auchy problem for
              {$u\sb{t}=\Delta u+u\sp{1+\alpha }$}},
   JOURNAL = {J. Fac. Sci. Univ. Tokyo Sect. I},
  FJOURNAL = {Journal of the Faculty of Science. University of Tokyo.
              Section I},
    VOLUME = {13},
      YEAR = {1966},
     PAGES = {109--124},
      ISSN = {0368-2269},
   MRCLASS = {35.37},
  MRNUMBER = {214914},
MRREVIEWER = {K.\ Rektorys},
}

@article {apl,
    AUTHOR = {Bernis, Francisco and Friedman, Avner},
     TITLE = {Higher order nonlinear degenerate parabolic equations},
   JOURNAL = {J. Differential Equations},
  FJOURNAL = {Journal of Differential Equations},
    VOLUME = {83},
      YEAR = {1990},
    NUMBER = {1},
     PAGES = {179--206},
      ISSN = {0022-0396},
   MRCLASS = {35K65 (35K35)},
  MRNUMBER = {1031383},
MRREVIEWER = {Ying Guo Lei},
       DOI = {10.1016/0022-0396(90)90074-Y},
       URL = {https://doi.org/10.1016/0022-0396(90)90074-Y},
}

@article {Ozawa,
    AUTHOR = {Ozawa, Tohru and Takeuchi, Taiki},
     TITLE = {Refined decay estimate and analyticity of solutions to the
              linear heat equation in homogeneous {B}esov spaces},
   JOURNAL = {J. Fourier Anal. Appl.},
  FJOURNAL = {The Journal of Fourier Analysis and Applications},
    VOLUME = {29},
      YEAR = {2023},
    NUMBER = {5},
     PAGES = {Paper No. 61, 27},
      ISSN = {1069-5869,1531-5851},
   MRCLASS = {35A23 (35A20 35B65 35K05 42B35 42B37)},
  MRNUMBER = {4645085},
MRREVIEWER = {Boubaker-Khaled\ Sadallah},
       DOI = {10.1007/s00041-023-10042-2},
       URL = {https://doi.org/10.1007/s00041-023-10042-2},
}

@article {Ishige,
    AUTHOR = {Ishige, Kazuhiro and Kawakami, Tatsuki and Okabe, Shinya},
     TITLE = {Existence of solutions for a higher-order semilinear parabolic
              equation with singular initial data},
   JOURNAL = {Ann. Inst. H. Poincar\'e{} C Anal. Non Lin\'eaire},
  FJOURNAL = {Annales de l'Institut Henri Poincar\'e{} C. Analyse Non
              Lin\'eaire},
    VOLUME = {37},
      YEAR = {2020},
    NUMBER = {5},
     PAGES = {1185--1209},
      ISSN = {0294-1449,1873-1430},
   MRCLASS = {35K30 (35A01 35K91)},
  MRNUMBER = {4138231},
MRREVIEWER = {Hans\ Engler},
       DOI = {10.1016/j.anihpc.2020.04.002},
       URL = {https://doi.org/10.1016/j.anihpc.2020.04.002},
}

@article {Miyake,
    AUTHOR = {Miyake, Nobuhito},
     TITLE = {Effect of decay rates of initial data on the sign of solutions
              to {C}auchy problems of polyharmonic heat equations},
   JOURNAL = {Math. Ann.},
  FJOURNAL = {Mathematische Annalen},
    VOLUME = {387},
      YEAR = {2023},
    NUMBER = {1-2},
     PAGES = {265--289},
      ISSN = {0025-5831,1432-1807},
   MRCLASS = {35K30 (35B05 35B09)},
  MRNUMBER = {4631047},
MRREVIEWER = {Lubomira\ G.\ Softova},
       DOI = {10.1007/s00208-022-02466-w},
       URL = {https://doi.org/10.1007/s00208-022-02466-w},
}

@article {SG,
    AUTHOR = {Sugitani, Sadao},
     TITLE = {On nonexistence of global solutions for some nonlinear
              integral equations},
   JOURNAL = {Osaka Math. J.},
  FJOURNAL = {Osaka Mathematical Journal},
    VOLUME = {12},
      YEAR = {1975},
     PAGES = {45--51},
      ISSN = {0388-0699},
   MRCLASS = {35K55 (45G99)},
  MRNUMBER = {470493},
       URL = {http://projecteuclid.org/euclid.ojm/1200757727},
}

@article {Gala,
    AUTHOR = {Galaktionov, V. A. and Pohozaev, S. I.},
     TITLE = {Existence and blow-up for higher-order semilinear parabolic
              equations: majorizing order-preserving operators},
   JOURNAL = {Indiana Univ. Math. J.},
  FJOURNAL = {Indiana University Mathematics Journal},
    VOLUME = {51},
      YEAR = {2002},
    NUMBER = {6},
     PAGES = {1321--1338},
      ISSN = {0022-2518},
   MRCLASS = {35K55 (35B40)},
  MRNUMBER = {1948452},
MRREVIEWER = {Julio D. Rossi},
       DOI = {10.1512/iumj.2002.51.2131},
       URL = {https://doi.org/10.1512/iumj.2002.51.2131},
}

@article {Hayakawa,
    AUTHOR = {Hayakawa, Kantaro},
     TITLE = {On nonexistence of global solutions of some semilinear
              parabolic differential equations},
   JOURNAL = {Proc. Japan Acad.},
  FJOURNAL = {Proceedings of the Japan Academy},
    VOLUME = {49},
      YEAR = {1973},
     PAGES = {503--505},
      ISSN = {0021-4280},
   MRCLASS = {35K60},
  MRNUMBER = {338569},
MRREVIEWER = {J. A. Goldstein},
       URL = {http://projecteuclid.org/euclid.pja/1195519254},
}

@article {1,
    AUTHOR = {Jleli, Mohamed and Samet, Besssem},
     TITLE = {Finite time blow-up for a nonlocal in time nonlinear heat
              equation in an exterior domain},
   JOURNAL = {Appl. Math. Lett.},
  FJOURNAL = {Applied Mathematics Letters. An International Journal of Rapid
              Publication},
    VOLUME = {99},
      YEAR = {2020},
     PAGES = {105985, 6},
      ISSN = {0893-9659},
   MRCLASS = {35R11 (26A33 35A01 35B44 35K91)},
  MRNUMBER = {3989171},
       DOI = {10.1016/j.aml.2019.07.016},
       URL = {https://doi.org/10.1016/j.aml.2019.07.016},
}

@article {t2,
    AUTHOR = {Lee, Tzong-Yow and Ni, Wei-Ming},
     TITLE = {Global existence, large time behavior and life span of
              solutions of a semilinear parabolic {C}auchy problem},
   JOURNAL = {Trans. Amer. Math. Soc.},
  FJOURNAL = {Transactions of the American Mathematical Society},
    VOLUME = {333},
      YEAR = {1992},
    NUMBER = {1},
     PAGES = {365--378},
      ISSN = {0002-9947},
   MRCLASS = {35K55 (35B30 35B40)},
  MRNUMBER = {1057781},
MRREVIEWER = {Chris Cosner},
       DOI = {10.2307/2154114},
       URL = {https://doi.org/10.2307/2154114},
}

@article {koba,
    AUTHOR = {Kobayashi, Kusuo and Sirao, Tunekiti and Tanaka, Hiroshi},
     TITLE = {On the growing up problem for semilinear heat equations},
   JOURNAL = {J. Math. Soc. Japan},
  FJOURNAL = {Journal of the Mathematical Society of Japan},
    VOLUME = {29},
      YEAR = {1977},
    NUMBER = {3},
     PAGES = {407--424},
      ISSN = {0025-5645},
   MRCLASS = {35K55},
  MRNUMBER = {450783},
MRREVIEWER = {Joel Smoller},
       DOI = {10.2969/jmsj/02930407},
       URL = {https://doi.org/10.2969/jmsj/02930407},
}

@article {slim,
    AUTHOR = {Majdoub, Mohamed and Otsmane, Sarah and Tayachi, Slim},
     TITLE = {Local well-posedness and global existence for the biharmonic
              heat equation with exponential nonlinearity},
   JOURNAL = {Adv. Differential Equations},
  FJOURNAL = {Advances in Differential Equations},
    VOLUME = {23},
      YEAR = {2018},
    NUMBER = {7-8},
     PAGES = {489--522},
      ISSN = {1079-9389},
   MRCLASS = {35K30 (35B30 35B40 35K91 46E30)},
  MRNUMBER = {3801829},
       URL = {https://projecteuclid.org/euclid.ade/1526004064},
}

@article {Palmieri,
    AUTHOR = {Georgiev, Vladimir and Palmieri, Alessandro},
     TITLE = {Lifespan estimates for local in time solutions to the
              semilinear heat equation on the {H}eisenberg group},
   JOURNAL = {Ann. Mat. Pura Appl. (4)},
  FJOURNAL = {Annali di Matematica Pura ed Applicata. Series IV},
    VOLUME = {200},
      YEAR = {2021},
    NUMBER = {3},
     PAGES = {999--1032},
      ISSN = {0373-3114},
   MRCLASS = {35A01 (35B44 35K05 35K08 35K58 35R03)},
  MRNUMBER = {4242116},
       DOI = {10.1007/s10231-020-01023-z},
       URL = {https://doi.org/10.1007/s10231-020-01023-z},
}

@article {Ikeda1,
    AUTHOR = {Ikeda, Masahiro and Sobajima, Motohiro},
     TITLE = {Sharp upper bound for lifespan of solutions to some critical
              semilinear parabolic, dispersive and hyperbolic equations via
              a test function method},
   JOURNAL = {Nonlinear Anal.},
  FJOURNAL = {Nonlinear Analysis. Theory, Methods \& Applications. An
              International Multidisciplinary Journal},
    VOLUME = {182},
      YEAR = {2019},
     PAGES = {57--74},
      ISSN = {0362-546X},
   MRCLASS = {35L71 (35B44 35K58 35L20 35Q55 35Q56)},
  MRNUMBER = {3894246},
       DOI = {10.1016/j.na.2018.12.009},
       URL = {https://doi.org/10.1016/j.na.2018.12.009},
}
\end{document}